\newtheorem{theorem}{Theorem}[section]
\newtheorem{lemma}[theorem]{Lemma}
\newtheorem{corollary}[theorem]{Corollary}
\theoremstyle{definition}
\newtheorem{definition}[theorem]{Definition}
\newtheorem*{acknowledgements}{Acknowledgements}
\theoremstyle{remark}
\newtheorem{remark}[theorem]{Remark}
\renewcommand{\epsilon}{\varepsilon}
\DeclareMathOperator{\Orb}{Orb}
\title{Orbital shadowing, $\omega$-limit sets and minimality} 
\author{Joel Mitchell}
\date{May 2019}
\begin{document}

\hypersetup{pageanchor=false} 

\begin{abstract} Let $X$ be a compact Hausdorff space, with uniformity $\mathscr{U}$, and let $f \colon X \to X$ be a continuous function. For $D \in \mathscr{U}$, a $D$-pseudo-orbit is a sequence $(x_i)$ for which $(f(x_i),x_{i+1}) \in D$ for all indices $i$. In this paper we show that pseudo-orbits trap $\omega$-limit sets in a neighbourhood of prescribed accuracy after a uniform time period. A consequence of this is a generalisation of a result of Pilyugin \textit{et al}: every system has the second weak shadowing property. By way of further applications we give a characterisation of minimal systems in terms of pseudo-orbits and show that every minimal system exhibits the strong orbital shadowing property.

\end{abstract}

\maketitle
Let $f \colon X\to X$ be a continuous map on a compact metric space $X$. We say $(X,f)$ is a (discrete) \textit{dynamical system}. A sequence $(x_i)$ in $X$ is called a \emph{$\delta$\textit{-pseudo-orbit}} provided $d(f(x_{i}),x_{i+1}) <\delta$ for each $i$. Pseudo-orbits are clearly relevant when calculating an orbit numerically, as rounding errors mean a computed orbit will in fact be a pseudo-orbit. The (finite or infinite) sequence $(y_i)$ in $X$ is said to \emph{$\epsilon$-shadow} the $(x_i)$ provided $d(y_i,x_i)<\epsilon$ for all indices $i$. First used implicitly by Bowen \cite{Bowen}, a system has \textit{shadowing}, or \textit{the pseudo-orbit tracing property}, if pseudo-orbits are shadowed by true orbits. Since then various other notions of shadowing have been studied, for example, ergodic, thick and Ramsey shadowing \cite{brian-oprocha, bmr, Dastjerdi, Fakhari, Oprocha2016}, limit shadowing \cite{BarwellGoodOprocha, GoodOprochaPuljiz2019,Pilyugin2007}, $s$-limit shadowing \cite{BarwellGoodOprocha,GoodOprochaPuljiz2019, LeeSakai}, orbital shadowing \cite{GoodMeddaugh2016, Pilyugin2007, PiluginRodSakai2002}, and inverse shadowing \cite{CorlessPilyugin, Lee}.  

The \textit{orbital shadowing property} was introduced in \cite{PiluginRodSakai2002} where the authors studied its relationship to classical stability properties, such as structural stability and 
$\Omega$-stability. Informally, a system has orbital shadowing if the closure of the set of points in any pseudo-orbit is close to an orbit closure of a point (see below of precise definitions). Orbital shadowing has since been studied by various authors (e.g \cite{GoodMeddaugh2016, GoodMitchellThomas, Pilyugin2007}). A stronger type of orbital shadowing was introduced in \cite{GoodMeddaugh2016}, aptly named \textit{strong orbital shadowing}, as part of the authors' quest to characterise when the set of $\omega$-limit sets of a system coincides with the set of closed internally chain transitive sets.

In this paper we prove (Theorem \ref{thmOmega}) that every compact metric dynamical system exhibits the following property:
For any $\epsilon>0$ there exist $n \in \mathbb{N}$ and $\delta>0$ such that given any $\delta$-pseudo-orbit 
$(x_i)_{i \geq 0}$ there exists $z\in X$ such that
\[B_\epsilon\left(\{x_i\}_{i=0} ^n \right) \supseteq \omega(z).\]
Thus initial segments of pseudo-orbits trap $\omega$-limit sets in their neighbourhood. As an application of this result we show that compact minimal systems have the strong orbital shadowing property as introduced in \cite{GoodMeddaugh2016}. Our methodology allows us to give a characterisation of minimal systems in terms of pseudo-orbits (Theorem \ref{thmMinimalIFF}). Along the way we generalise a result of Pilyugin \textit{et al} \cite{PiluginRodSakai2002} by showing that every compact Hausdorff system has the second weak shadowing property.

In order to keep our results as general as possible we take the phase space throughout to be compact Hausdorff but not necessarily metric; this is a setting which has attracted an increasing amount of attention in topological dynamics (e.g.  \cite{AuslanderGreschonigNagar,Ceccherini,GoodMacias, GoodMitchellThomas, Hood, MoralesSirvent, YanZeng}). In particular this means all of our results hold in a compact metric setting. 

\section{Preliminaries}

\subsection{Dynamical systems}
A \textit{dynamical system} is a pair $(X,f)$ consisting of a compact Hausdorff space $X$ and a continuous function $f\colon X \to X$. We say that the \textit{orbit} of $x$ under $f$ is the set of points $\{x, f(x), f^2(x), \ldots\}$; we denote this set by $\Orb_f(x)$. For a point $x \in X$, we define the $\omega$\textit{-limit set} of $x$ under $f$, denoted $\omega(x)$, to be the set of limit points of its orbit sequence. Formally
\[\omega(x)= \bigcap_{N \in \mathbb{N}}\overline{\{f^n(x) \mid n >N\}}.\]
Note that as $X$ is compact $\omega(x) \neq \emptyset$ for any $x \in X$ by Cantor's intersection theorem. 

For a dynamical system $(X,f)$, a subset $A \subseteq X$ is said to be \textit{positively invariant} (under $f$) if $f(A) \subseteq A$. The system is \textit{minimal} if there are no proper, nonempty, closed, positively-invariant subsets of $X$. Equivalently, a system is minimal if $\omega(x)=X$ for all $x \in X$. 

If $X$ is a metric space, a sequence $(x_i)_{i \geq 0}$ in $X$ is called a $\delta$-pseudo-orbit if $d(f(x_i), x_{i+1})< \delta$ for all $i \geq 0$.

\begin{definition}
Let $X$ be a metric space. The system $(X,f)$ has the \textit{orbital shadowing} property if for all $\epsilon>0$, there exists $\delta>0$ such that for any $\delta$-pseudo-orbit $( x_i)_{i \geq 0}$, there exists a point $z$ such that 

\[d_H\left(\overline{\{x_i\}_{i\geq 0}}, \overline{\{f^i(z)\}_{i\geq 0}}\right) <\epsilon.\]
\end{definition}
Here $d_H$ denotes the Hausdorff metric, defined on the compact subsets of $X$, which is given by: \[d_H (A,A^\prime)= \inf \{\epsilon>0 \colon A \subseteq B_\epsilon (A^\prime) \text{ and } A^\prime \subseteq B_\epsilon (A)\}. \]
The following weakening of orbital shadowing was introduced in \cite{PiluginRodSakai2002}.
\begin{definition}
Let $X$ be a metric space. The system $(X,f)$ has the \textit{second weak shadowing} property if for all $\epsilon>0$, there exists 
$\delta>0$ such that for any $\delta$-pseudo-orbit $(x_i)_{i\geq 0}$, there exists a point $z$ such that
\[\Orb(z) \subseteq B_\epsilon\left( \{x_i\}_{i\geq 0}\right).\]
\end{definition}
The following strengthening of orbital shadowing was introduced in \cite{GoodMeddaugh2016}. The authors demonstrate it to be distinct.
\begin{definition}
Let $X$ be a metric space. The system $(X,f)$ has the \textit{strong orbital shadowing} property if for all $\epsilon>0$, there exists $\delta>0$ such that for any $\delta$-pseudo-orbit $(x_i)_{i \geq 0}$, there exists a point $z$ such that, for all $N \in \mathbb{N}_0$,

\[d_H\left(\overline{\{x_{N+i}\}_{i\geq 0}}, \overline{\{f^{N+i}(z)\}_{i\geq 0}}\right) <\epsilon.\]
\end{definition}

\subsection{Uniform spaces}
Let $X$ be a nonempty set and $A \subseteq X \times X$. Let $A^{-1}=\{(y,x) \mid (x,y) \in A\}$; we call this the \textit{inverse} of $A$. The set $A$ is said to be \textit{symmetric} if $A=A^{-1}$. For any $A_1, A_2 \subseteq X \times X$ we define the composite $A_1 \circ A_2$ of $A_1$ and $A_2$ as 
\[A_1 \circ A_2 = \{(x,z) \mid \exists y\in X : (x,y) \in A_1, (y,z) \in A_2\}.\]
For any $n \in \mathbb{N}$ and $A \subseteq X \times X$ we denote by $nA$ the $n$-fold composition of $A$ with itself, i.e. 
\[nA=\underbrace{A\circ A\circ A \cdots A}_{n \text{ times}}.\]
The \textit{diagonal} of $X \times X$ is the set $\Delta= \{(x,x) \mid x \in X\}$. A subset $A \subseteq X\times X$ is called an \textit{entourage} if $A \supseteq \Delta$.

\begin{definition} A \textit{uniformity} $\mathscr{U}$ on a set $X$ is a collection of entourages of the diagonal such that the following conditions are satisfied.

\begin{enumerate}[label=\alph*.]
\item $E_1, E_2 \in \mathscr{U} \implies E_1 \cap E_2 \in \mathscr{U}$.
\item $E \in \mathscr{U}, E \subseteq D \implies D \in \mathscr{U}$.
\item $E \in \mathscr{U} \implies D \circ D \subseteq E$ for some $D \in \mathscr{U}$.
\item $E \in \mathscr{U} \implies D^{-1}\subseteq E$ for some $D \in \mathscr{U}$.
\end{enumerate}

\end{definition}

We call the pair $(X, \mathscr{U})$ a {\em uniform space}. We say $\mathscr{U}$ is {\em separating} if $\bigcap_{E \in \mathscr{U}} E = \Delta$; in this case we say $X$ is {\em separated}. A subcollection $\mathscr{V}$ of $\mathscr{U}$ is said to be a {\em base} for $\mathscr{U}$ if for any $E \in\mathscr{U}$ there exists $D \in \mathscr{V}$ such that $E \subseteq D$. Clearly any base $\mathscr{V}$ for a uniformity will have the following properties:
\begin{enumerate}
\item $E_1, E_2 \in \mathscr{U} \implies$ there exists $D \in \mathscr{V}$ such that $D \subseteq E_1 \cap E_2 $.
\item $E \in \mathscr{U} \implies D \circ D \subseteq E$ for some $D \in \mathscr{V}$.
\item $E \in \mathscr{U} \implies D^{-1}\subseteq E$ for some $D \in \mathscr{V}$.
\end{enumerate}
If $\mathscr{U}$ is separating then $\mathscr{V}$ will satisfy $\bigcap_{E \in \mathscr{V}} E = \Delta$.

\begin{remark}\label{RemarkSymFormBase} The symmetric entourages of a uniformity $\mathscr{U}$ form a base for said uniformity. In virtue of this, without loss of generality, we may assume that every entourage in the uniformity that we refer to is symmetric. This will be a standing assumption throughout this paper. 
\end{remark}


For an entourage $E \in \mathscr{U}$ and a point $x \in X$ we define the set $B_E(x)= \{y \in X \mid (x,y) \in E\}$; we refer to this set as the $E$-\textit{ball about} $x$. This naturally extends to a subset $A \subseteq X$; $B_E(A)= \bigcup_{x \in A}B_E(x)$; in this case we refer to the set $B_E(A)$ as the $E$-\textit{ball about} $A$. We emphasise that (see \cite[Section 35.6]{Willard}):
\begin{itemize}
\item For all $x \in X$, the collection $\mathscr{B}_x \coloneqq \{ B_E(x) \mid E \in \mathscr{U} \}$ is a neighbourhood base at $x$, making $X$ a topological space. The same topology is produced if any base $\mathscr{V}$ of $\mathscr{U}$ is used in place of $\mathscr{U}$.
\item The topology is Hausdorff if and only if $\mathscr{U}$ is separating.
\end{itemize}

For a compact Hausdorff space $X$ there is a unique uniformity $\mathscr{U}$ which induces the topology and the space is metric if the uniformity has a countable base (see \cite[Chapter 8]{Engelking}). For a metric space, a natural base for the uniformity would be the $1/2^n$ neighbourhoods of the diagonal. 


We may use uniformities to give appropriate definitions of orbital shadowing, second weak shadowing and strong orbital shadowing in the more general setting of uniform spaces. First of all, given an entourage $D \in \mathscr{U}$, a sequence $(x_i)_{i \geq 0}$ in $X$ is called a $D$-pseudo-orbit if $(f(x_i), x_{i+1}) \in D$ for all $i \geq 0$.

\begin{definition}
Let $X$ be a uniform space. The system $(X,f)$ has the \textit{orbital shadowing} property if for all $E \in \mathscr{U}$, there exists $D \in \mathscr{U}$ such that for any $D$-pseudo-orbit $( x_i)_{i \geq 0}$, there exists a point $z \in X$ such that 
\[\Orb(z) \subseteq B_E\left( \{x_i\}_{i\geq 0}\right)\]
and
\[\{x_i\}_{i\geq 0} \subseteq B_E\left(\Orb(z) \right).\]
\end{definition}

\begin{definition}
Let $X$ be a uniform space. The system $(X,f)$ has \textit{second weak shadowing} if for all $E \in \mathscr{U}$, there exists 
$D \in \mathscr{U}$ such that for any $D$-pseudo-orbit $(x_i)_{i\geq 0}$, there exists a point $z$ such that

\[\Orb(z) \subseteq B_E\left( \{x_i\}_{i\geq 0}\right).\]
\end{definition}

\begin{definition}
Let $X$ be a uniform space. The system $(X,f)$ has the \textit{strong orbital shadowing} property if for all $E \in \mathscr{U}$, there exists $D \in \mathscr{U}$ such that for any $D$-pseudo-orbit $( x_i)_{i \geq 0}$, there exists a point $z \in X$ such that, for all $N \in \mathbb{N}_0$,
\[\{f^{N+i}(z)\}_{i\geq 0}\subseteq B_E\left( \{x_{N+i}\}_{i\geq 0}\right)\]
and
\[\{x_{N+i}\}_{i\geq 0} \subseteq B_E\left(\{f^{N+i}(z)\}_{i\geq 0} \right).\]
\end{definition}
When $X$ is a compact metric space these definitions coincide with the previously given metric versions.

\medskip

Throughout this paper, as $X$ is a compact Hausdorff space, we denote the unique uniformity associated with $X$ by $\mathscr{U}$. 

\section{Main results}

\begin{lemma}\label{Every}
Let $(X,f)$ be a dynamical system where $X$ is a compact Hausdorff space. Then $(X,f)$ satisfies the following:
\[\forall E \in \mathscr{U} \, \forall x \in X \exists n \in \mathbb{N} \exists z \in X \text{ s.t. } \]
\[ \bigcup_{i=1} ^n B_{E}\left(f^i(x)\right) \supseteq \omega(z).\]

\end{lemma}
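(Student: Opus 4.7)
The natural candidate is simply $z = x$. The task then reduces to showing that for any $E \in \mathscr{U}$ there exists $n \in \mathbb{N}$ with
\[\omega(x) \subseteq \bigcup_{i=1}^n B_E(f^i(x)).\]
The essential ingredients are compactness of $\omega(x)$ (it is a closed subset of the compact space $X$) and the ``triangle inequality'' provided by picking a symmetric $D \in \mathscr{U}$ with $D \circ D \subseteq E$, which lets us convert closeness-to-$y$ into closeness-to-some-iterate-$f^i(x)$ whenever $y \in \omega(x)$.

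Concretely, I would first choose a symmetric $D \in \mathscr{U}$ with $D \circ D \subseteq E$. For each $y \in \omega(x)$, the set $B_D(y)$ is a neighbourhood of $y$ in the uniform topology, and so contains an open neighbourhood $V_y$ of $y$. Since $y$ is an $\omega$-limit point of $x$, every open neighbourhood of $y$ meets the tail of the orbit of $x$, so we may fix some index $i_y \geq 1$ with $f^{i_y}(x) \in V_y \subseteq B_D(y)$.

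Using compactness of $\omega(x)$, the open cover $\{V_y : y \in \omega(x)\}$ admits a finite subcover $V_{y_1}, \ldots, V_{y_k}$. For any $w \in V_{y_j}$ we have $(w, y_j) \in D$ and $(y_j, f^{i_{y_j}}(x)) \in D$ (using symmetry of $D$), hence $(w, f^{i_{y_j}}(x)) \in D \circ D \subseteq E$, so $V_{y_j} \subseteq B_E(f^{i_{y_j}}(x))$. Setting $n = \max_j i_{y_j}$ gives
\[\omega(x) \subseteq \bigcup_{j=1}^k V_{y_j} \subseteq \bigcup_{j=1}^k B_E\bigl(f^{i_{y_j}}(x)\bigr) \subseteq \bigcup_{i=1}^n B_E(f^i(x)),\]
as required.

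The only genuinely subtle point is that in a general uniform space $B_D(y)$ need not be open, so one cannot cover $\omega(x)$ directly by the $B_D(y)$ and apply compactness; this is precisely why one passes to an open neighbourhood $V_y \subseteq B_D(y)$ and why the factor of two in the composition $D \circ D \subseteq E$ is essential. Beyond that manoeuvre, everything is a straightforward compactness-plus-composition argument.
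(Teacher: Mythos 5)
Your proof is correct and follows essentially the same route as the paper's: take $z=x$, halve the entourage, cover the compact set $\omega(x)$ by balls about its points, extract a finite subcover, locate an iterate of $x$ in each piece, and take the maximum index. The only difference is that you are slightly more careful than the paper in passing to open neighbourhoods $V_y\subseteq B_D(y)$ before invoking compactness, which is a legitimate refinement rather than a change of approach.
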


\begin{proof} Take $E \in \mathscr{U}$ and pick $x \in X$. Let $E_0 \in \mathscr{U}$ be such that $2E_0 \subseteq E$. Take a finite subcover of the open cover $\{B_{E_0}(y) \mid y \in \omega(x)\}$ of $\omega(x)$. For each element of this subcover there exists $n$ such that $f^n(x)$ lies inside it. Pick one such $n$ for each element and then take the largest. The result follows.
\end{proof}

\begin{lemma}\label{Every2}
Let $(X,f)$ be a dynamical system where $X$ is a compact Hausdorff space. Then $(X,f)$ satisfies the following:
\[\forall E \in \mathscr{U} \, \exists n \in \mathbb{N} \text{ s.t. } \forall x \in X \exists z \in X \text{ s.t. } \]
\[ \bigcup_{i=1} ^n B_{E}\left(f^i(x)\right) \supseteq \omega(z).\]
\end{lemma}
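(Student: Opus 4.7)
The plan is to upgrade Lemma \ref{Every} to a version uniform in $x$ by a standard compactness argument, with continuity used to transfer the conclusion from one base point to its neighbours.

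First I would pick a symmetric entourage $E_0 \in \mathscr{U}$ with $2E_0 \subseteq E$ and apply Lemma \ref{Every} with $E_0$ in place of $E$. This gives, for each $x \in X$, a natural number $n_x$ and a point $z_x \in X$ such that
\[\bigcup_{i=1}^{n_x} B_{E_0}(f^i(x)) \supseteq \omega(z_x).\]

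Next I would use continuity of $f, f^2, \ldots, f^{n_x}$ at $x$ to produce an open neighbourhood $U_x$ of $x$ such that, for every $x' \in U_x$ and every $1 \leq i \leq n_x$, we have $(f^i(x), f^i(x')) \in E_0$. For such an $x'$ and any $y \in B_{E_0}(f^i(x))$, symmetry of $E_0$ gives $(f^i(x'), f^i(x)) \in E_0$ and hence $(f^i(x'), y) \in 2E_0 \subseteq E$, so $B_{E_0}(f^i(x)) \subseteq B_E(f^i(x'))$. Consequently
\[\bigcup_{i=1}^{n_x} B_E(f^i(x')) \supseteq \omega(z_x).\]

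Finally I would invoke compactness of $X$: the family $\{U_x\}_{x \in X}$ is an open cover, so there is a finite subcover $U_{x_1}, \ldots, U_{x_k}$. Setting $n = \max\{n_{x_1}, \ldots, n_{x_k}\}$, every $x \in X$ lies in some $U_{x_j}$, and taking $z = z_{x_j}$ we obtain
\[\bigcup_{i=1}^n B_E(f^i(x)) \supseteq \bigcup_{i=1}^{n_{x_j}} B_E(f^i(x)) \supseteq \omega(z_{x_j}),\]
as required.

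The argument is essentially routine once the right entourage bookkeeping is set up; the only genuinely delicate step is the continuity passage, where one must be sure that a single neighbourhood $U_x$ controls all of $f, f^2, \ldots, f^{n_x}$ simultaneously. Since $n_x$ is finite this is handled by intersecting finitely many preimages, and the triangle-style estimate $E_0 \circ E_0 \subseteq E$ is what converts control at the base point $x$ into control at nearby points $x'$.
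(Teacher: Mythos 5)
Your proof is correct and follows essentially the same route as the paper: apply Lemma \ref{Every} with an entourage $E_0$ satisfying $2E_0 \subseteq E$, use continuity to find a neighbourhood of each base point on which the first $n_x$ iterates stay $E_0$-close, extract a finite subcover by compactness, and take the maximum of the corresponding $n_{x_j}$. The paper phrases the neighbourhoods via entourages $D_x$ rather than open sets $U_x$, but the composition argument $E_0 \circ E_0 \subseteq E$ is the same.
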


\begin{proof}
Fix $E \in \mathscr{U}$. Let $E_0 \in \mathscr{U}$ be such that $2E_0 \subseteq E$. For each $x \in X$ let $n_x \in \mathbb{N}$ be as in the condition in Lemma \ref{Every} for $E_0$ and let $D_x \in \mathscr{U}$ be such that, for any $y \in X$, if $(x,y) \in D_x$ then, for each $i \in \{0,\ldots, n_x\}$, $(f^i(x), f^i(y)) \in E_0$. The collection $\{B_{D_x}(x) \mid x \in X\}$ forms an open cover. Let
\[\left\{B_{D_{x_i}}(x_i)\mid i \in \{1,\ldots, k\}\right\},\]
be a finite subcover. Take $n = \max_{i \in \{1, \ldots, k\}} n_{x_i}$. Then, by composition, for any $x \in X$ there exists $z \in X$ such that 
\[\bigcup _{i=1} ^n B_E
\left(f^i(x)\right) \supseteq \omega(z). \]
\end{proof}

\begin{theorem}\label{thmOmega}
Let $(X,f)$ be a dynamical system where $X$ is a compact Hausdorff space. Then for any $E \in \mathscr{U}$ 
there exist $n \in \mathbb{N}$ and $D \in \mathscr{U}$ such that given any $D$-pseudo-orbit 
$(x_i)_{i \geq 0}$ there exists $z\in X$ such that
\[B_E\left(\{x_i\}_{i=0} ^n \right) \supseteq \omega(z).\]
In particular,
\[B_E\left(\{x_i\}_{i \geq 0}\right) \supseteq \omega(z).\]
\end{theorem}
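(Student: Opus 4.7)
The plan is to combine Lemma~\ref{Every2} with an approximation statement saying that initial segments of a pseudo-orbit can be made arbitrarily close to a true orbit. Given $E \in \mathscr{U}$, I would first pick a symmetric $E_0 \in \mathscr{U}$ with $2E_0 \subseteq E$ and apply Lemma~\ref{Every2} to $E_0$ to obtain an integer $n \in \mathbb{N}$ such that for every $x \in X$ there exists $z \in X$ with $\bigcup_{i=1}^{n} B_{E_0}(f^i(x)) \supseteq \omega(z)$. This $n$ will be the $n$ of the theorem, and $x$ will eventually be taken to be $x_0$.

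Next I would establish an auxiliary approximation fact: for any $E' \in \mathscr{U}$ and any $N \in \mathbb{N}$, there exists $D \in \mathscr{U}$ such that every $D$-pseudo-orbit $(x_i)_{i \geq 0}$ satisfies $(f^i(x_0), x_i) \in E'$ for all $0 \leq i \leq N$. Since $X$ is compact Hausdorff, each iterate $f^0, f^1, \ldots, f^{N-1}$ is uniformly continuous with respect to $\mathscr{U}$, so one can choose a symmetric $D \in \mathscr{U}$ with the property that $(a,b) \in D$ implies $(f^k(a), f^k(b)) \in E''$ for every $0 \leq k \leq N-1$, where $E'' \in \mathscr{U}$ is small enough that $N E'' \subseteq E'$ (using axioms (a) and (c) of a uniformity). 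Writing $f^i(x_0) = f^{i-j-1}(f(x_j))$ for $0 \leq j \leq i-1$, the telescoping chain $f^i(x_0), f^{i-1}(x_1), f^{i-2}(x_2), \ldots, f(x_{i-1}), x_i$ has consecutive pairs each lying in $E''$, whence $(f^i(x_0), x_i) \in i E'' \subseteq N E'' \subseteq E'$.

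I would then apply this auxiliary fact with $E' = E_0$ and $N = n$ to obtain the entourage $D$ of the theorem. Given any $D$-pseudo-orbit $(x_i)_{i \geq 0}$, invoke Lemma~\ref{Every2} with $x = x_0$ to get $z \in X$ with $\omega(z) \subseteq \bigcup_{i=1}^{n} B_{E_0}(f^i(x_0))$. Since $(f^i(x_0), x_i) \in E_0$ and $E_0$ is symmetric, each ball $B_{E_0}(f^i(x_0))$ is contained in $B_{2E_0}(x_i) \subseteq B_E(x_i)$, so $\omega(z) \subseteq B_E(\{x_i\}_{i=0}^{n})$, which is the desired conclusion. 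The ``in particular'' clause is immediate because $\{x_i\}_{i=0}^{n} \subseteq \{x_i\}_{i \geq 0}$.

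The main obstacle I anticipate is the careful nesting of entourages in the auxiliary fact: one has to invoke uniform continuity of the iterates $f^k$ simultaneously for $0 \leq k \leq n-1$ (taking the intersection of the corresponding entourages, which remains in $\mathscr{U}$ by axiom (a)) and then verify that the telescoping composition really does stay inside $n E'' \subseteq E_0$. Once this bookkeeping is in place, the rest of the argument is a direct transfer from the true-orbit statement of Lemma~\ref{Every2} to the pseudo-orbit setting.
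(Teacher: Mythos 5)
Your proposal is correct and follows essentially the same route as the paper: apply Lemma~\ref{Every2} to an entourage $E_0$ with $2E_0 \subseteq E$ to get $n$, choose $D$ so that every $D$-pseudo-orbit satisfies $(f^i(x_0),x_i)\in E_0$ for $i\le n$, and conclude by composing entourages. The only difference is bookkeeping: you obtain the tracking estimate via a telescoping chain through $iE''\subseteq nE''\subseteq E_0$ using uniform continuity of all iterates $f^k$ at once, whereas the paper nests entourages recursively via $2D_i \subseteq f^{-1}(D_{i-1})\cap D_{i-1}$; both are standard and valid.
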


\begin{proof}
 Let $E \in \mathscr{U}$ be given and let $E_0 \in \mathscr{U}$ be such that $2E_0 \subseteq E$. Take $n \in \mathbb{N}$ as in the condition in Lemma \ref{Every2} with respect to $E_0$. By uniform continuity we can choose $D \in \mathscr{U}$ such that every $D$-pseudo-orbit $E_0$-shadows the first $n$ iterates of its origin. Explicitly: Let $D_1 \subseteq E_0$ be an entourage such that, for any $y, z \in X$, if $(y,z) \in D_1$ then $(f(y),f(z)) \in E_0$. For each $i \in \{2,\ldots, n\}$ let $D_i \in \mathscr{U}$ be such that $2D_i \subseteq f^{-1}(D_{i-1}) \cap D_{i-1}$.

Now take $D\coloneqq D_n$. Suppose $(x_i)_{i \geq 0}$ is a $D$-pseudo-orbit. Then $(f^i(x_0),x_i) \in E_0$ for all $i \in \{0, \ldots n\}$. By the given condition there exists $z \in X$ such that 
\[ \bigcup_{i=1} ^n B_{E_0}\left(f^i(x_0)\right) \supseteq \omega(z).\]
Since, for each $i \in \{0, \ldots, n\}$, $(f^i(x_0),x_i) \in E_0$ it follows from entourage composition, and the fact that $2E_0 \subseteq E$, that 
\[ B_E\left( \{x_i\}_{i=0} ^n\right) \supseteq \omega(z).\]
\end{proof}

The fact that all compact Hausdorff systems exhibit second weak shadowing now follows as a simple corollary to Theorem \ref{thmOmega}. Note that Corollary \ref{CorOrbTrap} is a generalisation of \cite[Theorem 3.1]{PiluginRodSakai2002}.

\begin{corollary}\label{CorOrbTrap}
Let $(X,f)$ be a dynamical system where $X$ is a compact Hausdorff space. Then the system has second weak shadowing.
\end{corollary}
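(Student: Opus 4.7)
The plan is to reduce the corollary directly to Theorem \ref{thmOmega} by exploiting the positive invariance of $\omega$-limit sets. Fix $E \in \mathscr{U}$; Theorem \ref{thmOmega} supplies $n \in \mathbb{N}$ and $D \in \mathscr{U}$ such that every $D$-pseudo-orbit $(x_i)_{i \geq 0}$ admits a point $z \in X$ with $\omega(z) \subseteq B_E\!\left(\{x_i\}_{i \geq 0}\right)$. The gap between this and second weak shadowing is that the definition demands a point whose entire \emph{orbit}, not merely its $\omega$-limit set, sits inside the $E$-ball about the pseudo-orbit.

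To close this gap I would pick any $z' \in \omega(z)$; this is possible because $\omega(z)$ is nonempty by compactness of $X$ (as noted after the definition of $\omega$-limit sets). The key observation is that $\omega$-limit sets are positively invariant: continuity of $f$ guarantees that if $y \in \omega(z)$ is a limit point of the net $(f^{n_\alpha}(z))$, then $f(y)$ is a limit point of $(f^{n_\alpha+1}(z))$ and hence lies in $\omega(z)$ as well. Consequently $\Orb(z') \subseteq \omega(z)$, and chaining the containments yields
\[
\Orb(z') \;\subseteq\; \omega(z) \;\subseteq\; B_E\!\left(\{x_i\}_{i \geq 0}\right),
\]
which is precisely the conclusion of second weak shadowing for the entourage $E$ and response $D$.

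There is no substantive obstacle here: Theorem \ref{thmOmega} does all the heavy lifting, and the corollary amounts to the observation that the same $D$ works, provided one replaces $z$ by a point inside its $\omega$-limit set. The only routine check is positive invariance of $\omega(z)$ in the compact Hausdorff (rather than metric) setting, which follows from continuity of $f$ together with compactness, using nets in place of sequences.
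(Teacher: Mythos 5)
Your proposal is correct and follows exactly the paper's own argument: apply Theorem \ref{thmOmega} to obtain $z$ with $\omega(z) \subseteq B_E\left(\{x_i\}_{i\geq 0}\right)$, then use positive invariance of $\omega(z)$ together with its nonemptiness (by compactness) to replace $z$ by a point of $\omega(z)$ whose whole orbit stays in the $E$-ball. No gaps; the net-based justification of positive invariance in the non-metric setting is a routine check, just as you say.
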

\begin{proof}
Let $E \in \mathscr{U}$ be given and let $D \in \mathscr{U}$ correspond to this as in Theorem \ref{thmOmega}. Take a $D$-pseudo-orbit $(x_i)_{i \geq 0}$. By Theorem \ref{thmOmega} there exists $z \in X$ such that 
\[ B_E\left( \{x_i\}_{i\geq 0}\right) \supseteq \omega(z).\]
Since $\omega$-limit sets are positively invariant it follows that for any $y \in \omega(z)$
\[\Orb(y) \subseteq B_E\left( \{x_i\}_{i\geq 0}\right).\]
It remains to note that $\omega(z) \neq \emptyset$ as $X$ is compact.
\end{proof}

\begin{theorem}\label{thmMinimalIFF}
Let $X$ is a compact Hausdorff space and $f \colon X \to X$ be a continuous function. The system $(X,f)$ is minimal if and only if for any $E \in \mathscr{U}$ there exist $D \in \mathscr{U}$ and $n \in \mathbb{N}$ such that for any two $D$-pseudo-orbits $(x_i)_{i \geq 0}$ and $(y_i)_{i \geq 0}$ 
\[\{y_i\}_{i =0} ^n \subseteq B_E\left( \{x_i\}_{i=0} ^n\right)\]
and
\[\{x_i\}_{i =0} ^n \subseteq B_E\left( \{y_i\}_{i=0} ^n\right).\]
\end{theorem}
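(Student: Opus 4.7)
The plan is to prove both directions as straightforward consequences of Theorem \ref{thmOmega}, exploiting the fact that true orbits are $D$-pseudo-orbits for every $D \in \mathscr{U}$.

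For the forward direction, assume $(X,f)$ is minimal, so that $\omega(z) = X$ for every $z \in X$. Given $E \in \mathscr{U}$, I would invoke Theorem \ref{thmOmega} to obtain $n \in \mathbb{N}$ and $D \in \mathscr{U}$ such that for any $D$-pseudo-orbit $(w_i)_{i \geq 0}$ there exists $z \in X$ with $B_E(\{w_i\}_{i=0}^n) \supseteq \omega(z)$. By minimality this forces $B_E(\{w_i\}_{i=0}^n) = X$. Applying this simultaneously to both $D$-pseudo-orbits $(x_i)$ and $(y_i)$ (the same $n$ and $D$ work for every pseudo-orbit) makes both required inclusions trivial, since $\{x_i\}_{i=0}^n \cup \{y_i\}_{i=0}^n \subseteq X$.

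For the reverse direction, I would argue by contrapositive. If $(X,f)$ is not minimal, some orbit closure $\overline{\mathrm{Orb}(x)}$ is a proper subset of $X$, so there exist a point $y \in X$ and a symmetric entourage $E \in \mathscr{U}$ with $B_E(y) \cap \mathrm{Orb}(x) = \emptyset$; by symmetry of $E$ this is equivalent to $y \notin B_E(\mathrm{Orb}(x))$. Suppose now, for contradiction, that the displayed pseudo-orbit condition holds for this $E$, yielding $D$ and $n$. The true orbits $(f^i(x))_{i \geq 0}$ and $(f^i(y))_{i \geq 0}$ are trivially $D$-pseudo-orbits, so the hypothesis gives
\[y = y_0 \in B_E\bigl(\{f^i(x)\}_{i=0}^n\bigr) \subseteq B_E\bigl(\mathrm{Orb}(x)\bigr),\]
contradicting the choice of $y$ and $E$.

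I do not foresee any substantive obstacle; the argument is essentially book-keeping built on top of Theorem \ref{thmOmega}. The only point requiring momentary care is the extraction of a \emph{symmetric} entourage $E$ separating $y$ from $\mathrm{Orb}(x)$, which is immediate from the description of the topology via the balls $B_E(y)$ together with Remark \ref{RemarkSymFormBase}.
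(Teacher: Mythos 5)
Your proposal is correct and follows essentially the same route as the paper: the forward direction is identical (Theorem \ref{thmOmega} plus minimality forces $B_E(\{x_i\}_{i=0}^n)=X$), and the reverse direction is the same contrapositive idea of feeding two true orbits in as pseudo-orbits and separating a point from an orbit by a symmetric entourage. The only cosmetic difference is that the paper separates a point $z$ from $\omega(x)$ and uses the orbit of some $y\in\omega(x)$, whereas you separate $y$ directly from $\overline{\Orb(x)}$; both work.
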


\begin{proof}
First suppose the system is minimal. Let $E \in \mathscr{U}$ be given. Take $D \in \mathscr{U}$ and $n \in \mathbb{N}$ corresponding to $E$ as in Theorem \ref{thmOmega}. Now let $(x_i)_{i \geq 0}$ and $(y_i)_{i \geq 0}$ be two $D$-pseudo-orbits. By Theorem \ref{thmOmega} there exist $z_1, z_2 \in X$ such that
$B_E\left(\{x_i\}^n _{i=0} \right) \supseteq \omega(z_1)$
and
$B_E\left(\{y_i\}^n _{i=0} \right) \supseteq \omega(z_2).$
As $(X,f)$ is minimal $\omega(z_1)=\omega(z_2)=X$. It follows that
$B_E\left(\{x_i\}^n _{i=0} \right)=B_E\left(\{y_i\}^n _{i=0} \right)=X.$
Hence
\[\{y_i\}_{i =0} ^n \subseteq B_E\left( \{x_i\}_{i=0} ^n\right)\]
and
\[\{x_i\}_{i =0} ^n \subseteq B_E\left( \{y_i\}_{i=0} ^n\right).\]

Now suppose the system is not minimal. Then there exists $x \in X$ such that $\omega(x) \neq X$. Pick $y \in \omega(x)$ and let $z \in X \setminus \omega(x)$. Take $E \in \mathscr{U}$ such that $B_E(z) \cap \omega(x) = \emptyset$. As $E$ is symmetric by our standing assumption, $z \notin B_E(\omega(x))$. Consider the pseudo-orbits given by the orbit sequences of $y$ and $z$: these are $D$-pseudo-orbits for any $D \in \mathscr{U}$. As $\omega$-limit sets are positively invariant, $\Orb(y) \subseteq \omega(x)$. Since $z \notin B_E(\omega(x))$ it also follows that $z \notin B_E(\Orb(y))$. In particular $\Orb(z) \not\subseteq B_E\left(\Orb(y)\right)$.
\end{proof}

For the case when $X$ is a compact metric space Theorem \ref{thmMinimalIFF} may be formulated as follows: A dynamical system $(X,f)$ is minimal precisely when for any $\epsilon>0$ there exist $\delta>0$ and $n \in \mathbb{N}$ such that for any two $\delta$-pseudo-orbits $(x_i)_{i \geq 0}$ and $(y_i)_{i \geq 0}$ 
\[d_H(\{x_i\}_{i =0} ^n , \{y_i\}_{i=0} ^n) <\epsilon.\]

\begin{corollary}
Let $X$ is a compact Hausdorff space and $f \colon X \to X$ be a continuous function. The system $(X,f)$ is minimal if and only if for any $E \in \mathscr{U}$ there exist $D \in \mathscr{U}$ and $n \in \mathbb{N}$ such that for any $D$-pseudo-orbit $(x_i)_{i \geq 0}$ we have $B_E\left(\{x_i\}_{i=0} ^n \right)=X$.
\end{corollary}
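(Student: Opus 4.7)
The plan is to reduce this corollary to results already in hand, using Theorem \ref{thmOmega} for one direction and mimicking the contrapositive half of the proof of Theorem \ref{thmMinimalIFF} for the other.

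For the forward direction I would argue directly: assume $(X,f)$ is minimal, fix $E \in \mathscr{U}$, and let $D \in \mathscr{U}$ and $n \in \mathbb{N}$ be supplied by Theorem \ref{thmOmega}. Then for any $D$-pseudo-orbit $(x_i)_{i\geq 0}$ there exists $z \in X$ with $B_E(\{x_i\}_{i=0}^n) \supseteq \omega(z)$. Minimality forces $\omega(z) = X$, so $B_E(\{x_i\}_{i=0}^n) = X$, as required. This step is essentially a one-line invocation of Theorem \ref{thmOmega}.

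For the converse I would argue by contrapositive, reusing the construction from the second half of the proof of Theorem \ref{thmMinimalIFF}. Assume $(X,f)$ is not minimal, so some $x \in X$ has $\omega(x) \subsetneq X$. Pick $z \in X \setminus \omega(x)$; since the uniformity is separating and $\omega(x)$ is closed, there is a (symmetric) $E \in \mathscr{U}$ with $B_E(z) \cap \omega(x) = \emptyset$, equivalently $z \notin B_E(\omega(x))$. Choose any $y \in \omega(x)$; positive invariance of $\omega$-limit sets gives $\Orb(y) \subseteq \omega(x)$. The orbit sequence $(f^i(y))_{i\geq 0}$ is a $D$-pseudo-orbit for \emph{every} $D \in \mathscr{U}$, so for every choice of $D$ and every $n \in \mathbb{N}$ we have
\[ B_E\bigl(\{f^i(y)\}_{i=0}^n\bigr) \subseteq B_E(\Orb(y)) \subseteq B_E(\omega(x)) \not\ni z, \]
so $B_E(\{f^i(y)\}_{i=0}^n) \neq X$. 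This contradicts the assumed property for this particular $E$.

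There is no genuine obstacle here: the only point needing care is the extraction of a separating entourage $E$ between $z$ and the closed set $\omega(x)$, which is immediate from the fact that $\mathscr{U}$ is a separating uniformity inducing the Hausdorff topology on $X$. Everything else is either a direct citation of Theorem \ref{thmOmega} or the same ``true orbit inside $\omega(x)$ is a pseudo-orbit avoiding $B_E(z)$'' observation already exploited in Theorem \ref{thmMinimalIFF}.
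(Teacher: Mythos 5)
Your proposal is correct and follows essentially the same route as the paper: the paper's proof simply reads ``Immediate from the proof of Theorem \ref{thmMinimalIFF}'', and the two halves of that proof are precisely your two steps (Theorem \ref{thmOmega} plus $\omega(z)=X$ for the forward direction, and the orbit of a point of $\omega(x)$ as a $D$-pseudo-orbit avoiding $B_E(z)$ for the converse). You have merely written out explicitly what the paper leaves implicit.
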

\begin{proof}
Immediate from the proof of Theorem \ref{thmMinimalIFF}.
\end{proof}

\begin{corollary}
Let $X$ be a compact Hausdorff space. If $(X,f)$ is a minimal dynamical system then it exhibits the strong orbital shadowing property.
\end{corollary}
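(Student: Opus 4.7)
The plan is to leverage Theorem \ref{thmOmega} and observe that minimality collapses its conclusion to a statement about the whole space $X$. Given $E \in \mathscr{U}$, I invoke Theorem \ref{thmOmega} to obtain $D \in \mathscr{U}$ and $n \in \mathbb{N}$ such that for every $D$-pseudo-orbit $(y_i)_{i \geq 0}$ there is some $w \in X$ with $B_E(\{y_i\}_{i=0}^{n}) \supseteq \omega(w)$. Since the system is minimal, $\omega(w) = X$ for every $w \in X$, so this strengthens to the identity $B_E(\{y_i\}_{i=0}^{n}) = X$, which is precisely the device used in the forward direction of Theorem \ref{thmMinimalIFF}.

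Now fix a $D$-pseudo-orbit $(x_i)_{i \geq 0}$. The key observation is that any $z \in X$ whatsoever will witness the strong orbital shadowing condition for it; no coordination between $z$ and the pseudo-orbit is needed. To verify this, fix an arbitrary $N \in \mathbb{N}_0$. The tail $(x_{N+i})_{i \geq 0}$ is again a $D$-pseudo-orbit, and the true orbit $(f^{N+i}(z))_{i \geq 0}$ is trivially a $D$-pseudo-orbit. Applying the strengthened identity from the previous paragraph to each of these sequences gives $B_E(\{x_{N+i}\}_{i=0}^{n}) = X$ and $B_E(\{f^{N+i}(z)\}_{i=0}^{n}) = X$. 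The first equality shows that every $f^{N+j}(z)$ (in fact every point of $X$) lies in $B_E(\{x_{N+i}\}_{i \geq 0})$; the second shows that every $x_{N+j}$ lies in $B_E(\{f^{N+i}(z)\}_{i \geq 0})$. These are exactly the two inclusions demanded by the definition of strong orbital shadowing.

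I do not foresee a real obstacle: the argument is essentially immediate once Theorem \ref{thmOmega} is in hand and one notices that minimality promotes its conclusion from \emph{``contains some $\omega$-limit set''} to \emph{``equals $X$''}. The only mildly subtle point is that the uniform integer $n$ of Lemma \ref{Every2}, which is independent of the pseudo-orbit's starting point, does double duty here: it controls both the pseudo-orbit tail and the orbit of $z$ simultaneously, which is what lets a single $z$ work uniformly in $N$.
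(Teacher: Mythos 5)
Your proof is correct and takes essentially the same route as the paper: the paper applies Theorem \ref{thmMinimalIFF} to the two $D$-pseudo-orbits $(x_{N+i})_{i\geq 0}$ and $(f^{N+i}(z))_{i\geq 0}$, and the forward direction of that theorem is established by precisely your observation that minimality upgrades the conclusion of Theorem \ref{thmOmega} to $B_E\left(\{x_i\}_{i=0}^n\right)=X$ (which the paper also records as a separate corollary). You have simply inlined that intermediate step rather than citing it.
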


\begin{proof}
Let $E \in \mathscr{U}$ be given. Take $D \in \mathscr{U}$ and $n \in \mathbb{N}$ corresponding to $E$ as in Theorem \ref{thmMinimalIFF}. Now let $(x_i)_{i \geq 0}$ be a $D$-pseudo-orbit and pick any $z \in X$. Since $(x_{N+i})_{i \geq 0}$ and $(f^{N+i}(z))_{i \geq 0}$ are $D$-pseudo-orbits for all $N \in \mathbb{N}_0$, by Theorem \ref{thmMinimalIFF},
\[\{f^{N+i}(z)\}_{i =0} ^n \subseteq B_E\left( \{x_i\}_{i=0} ^n\right)\]
and
\[\{x_{N+i}\}_{i =0} ^n \subseteq B_E\left( \{f^{N+i}(z)\}_{i=0} ^n\right).\]

\end{proof}

\begin{acknowledgements} The funding provided by EPSRC/University of Birmingham is
gratefully acknowledged. The author would also like to thank Chris Good for his support and guidance.
\end{acknowledgements}

\bibliographystyle{plain} 
\bibliography{bib}

\end{document}